\newcommand{\na}{\mathbb{N}}
\newcommand{\ff}{\mathbb{F}}
\newcommand{\m}{\mathcal}
\renewcommand{\P}{\mathbb{P}}
\renewcommand{\deg}[1]{deg\,#1}
\newcommand{\diff}{\operatorname{Diff}}
\newcommand{\supp}{\operatorname{supp}}
\newcommand{\ncon}{\operatorname{\equiv\hspace{-4.5mm}{\diagup}}}
\newtheorem{thm}{Theorem}
\newtheorem{pro}[thm]{Proposition}
\newtheorem{rem}[thm]{Remark}
\begin{document}

\title{On cubic Kummer towers of Garcia, Stichtenoth and Thomas type}

\begin{abstract}
In this paper we initiate the study of the class of cubic Kummer type towers considered by Garcia, Stichtenoth and Thomas in 1997 by classifying the asymptotically good ones in this class.
\end{abstract}

\maketitle

\section{Introduction}
\label{intro}
It is well known the importance of asymptotically good recursive towers in coding theory and some other branches of information theory (see, for instance, \cite{NX01}). Among the class of recursive towers there is an important one, namely the class  of Kummer type towers which are recursively defined by equations of the form $y^m=f(x)$ for some suitable exponent $m$ and rational function $f(x)\in K(x)$. A particular case was studied by Garcia, Stichtenoth and Thomas in \cite{GST97} where a Kummer tower over a finite field $\ff_q$ with $q\equiv 1\mod m$ is recursively defined by an equation of the form
\begin{equation}\label{kummergst97}
y^m=x^df(x)\,,
\end{equation}
where $f(x)$ is a polynomial of degree $m-d$ such that $f(0)\neq 0$ and $\gcd(d,m)=1$. The authors showed that they have positive splitting rate and, assuming the existence of a subset $S_0$ of $\ff_q$ with certain properties, the good asymptotic behavior of such  towers can be deduced together with a concrete non trivial lower bound for their limit. Later Lenstra showed in \cite{Le02} that in the case of an equation of the form \eqref{kummergst97} over a prime field, there is not such a set $S_0$ satisfying the above conditions of Garcia, Stichtenoth and Thomas. Because of Lenstra's result it seems reasonable to expect that many Kummer towers defined by equations of the form \eqref{kummergst97} have infinite genus. However, to the best of our knowledge there are not examples of such towers in the literature.
The aim of this paper is to  classify those asymptotically good Kummer type towers considered by Garcia, Stichtenoth and Thomas in \cite{GST97} recursively defined by an equation of the form
\begin{equation}\label{cubicgst}
y^3=xf(x)\,,
\end{equation}
over a finite field $\ff_q$ where $q\equiv 1\mod 3$ and $f(t)\in\ff_q[t]$ is a monic and quadratic polynomial. It was shown in \cite{GST97} that there are choices of the polynomial $f$ giving good asymptotic behavior and even optimal behavior. For instance if $f(x)=x^2+x+1$  then the equation \eqref{cubicgst} defines an optimal tower over $\ff_4$, a finite field with four elements (see \cite[Example 2.3]{GST97}). It is worth to point out that the quadratic case (i.e. an equation of the form $y^2=x(x+a)$ with $0\neq a\in \ff_q$) is already included in the extensive computational search of good quadratic tame towers performed in \cite{MaWu05}.

The organization of the paper is as follows. In Section \ref{notanddef} we give the basic definitions and we establish the notation to be used throughout the paper. In Section \ref{genus} we give an overview of the main ideas, in the general setting of towers of function fields over a perfect field $K$, used to prove the infiniteness of the genus of a tower. In Section \ref{pyramid}  we prove some criteria involving the basic function field associated to a tower to check the infiniteness of its genus. Finally in Section \ref{examples} we prove our main result (Theorem \ref{teoexe2}) where we show that asymptotically good towers defined by an equation of the form \eqref{kummergst97}
\[y^3=x(x^2+bx+c)\,,\]
with $b,c \in \ff_q$ and $q\equiv 1 \mod 3$ fall into three mutually disjoint classes according to the way the quadratic polynomial $x^2+bx+c$ splits into linear factors over $\ff_q$.  From this result many examples of non skew recursive Kummer towers with positive splitting rate and infinite genus can be given. We would like to point out that there are very few known examples showing this phenomena.  An example of a non skew Kummer tower (but not of the form  \eqref{kummergst97}) with infinite genus over a prime field $\ff_p$ was given in \cite{MaWu05} but, as we will show at the end of  Section \ref{genus}, there is a mistake in the argument used by the authors.   There are also examples of non skew Kummer towers with bad asymptotic behavior over some non-prime finite fields  given by Hasegawa in \cite{Ha05} but those Kummer towers have zero splitting rate.

\section{Notation and Definitions}\label{notanddef}
In this work we shall be concerned with \emph{ towers} of function fields and this means a sequence $\m{F}=\{F_i\}_{i=0}^{\infty}$ of function fields over a field $K$ where for each index $i\geq 0$ the field $F_i$ is a proper subfield of $F_{i+1}$, the field extension $F_{i+1}/F_i$ is finite and separable and $K$ is the full field of constants of each field $F_i$ (i.e. $K$ is algebraically closed in each $F_i$). If the genus $g(F_i)\rightarrow \infty$ as $i\rightarrow \infty$ we shall say that $\m{F}$ is a {\em tower in the sense of Garcia and Stichtenoth}.

Following \cite{Stichbook09}  (see also \cite{GS07}), one way of constructing towers of function fields over $K$ is by giving a bivariate polynomial
\[H\in K[X,Y]\,,\]
 and a transcendental element $x_0$ over $K$. In this situation a tower $\mathcal{F}=\{F_i\}_{i=0}^{\infty}$ of function fields over $K$ is defined as
 \begin{enumerate}[(i)]
 \item $F_0=K(x_0)$, and
 \item $F_{i+1}=F_i(x_{i+1})$ where $H(x_i,x_{i+1})=0$ for $i\geq 0$.
 \end{enumerate}
 A suitable choice of the bivariate polynomial $H$ must be made in order to have towers. When the choice of $H$ satisfies all the required conditions we shall say that the tower $\mathcal{F}$ constructed in this way is a {\em recursive tower} of function fields over $K$. Note that for a recursive tower $\mathcal{F}=\{F_i\}_{i=0}^{\infty}$ of function fields over $K$ we have that
  $$F_i=K(x_0,\ldots,x_i)\qquad \text{for }i\geq 0,$$
where $\{x_i\}_{i=0}^{\infty}$ is a sequence of transcendental elements over $K$.

Associated to a recursive tower  $\mathcal{F}=\{F_i\}_{i=0}^{\infty}$ of function fields $F_i$  over $K$ we have the so called {\em basic function field} $K(x,y)$ where $x$ is transcendental over $K$ and $H(x,y)=0$.

For the sake of simplicity we shall say from now on that $H$ defines the tower $\m{F}$ or, equivalently, that  tower $\m{F}$ is recursively defined by the equation $H(x,y)=0$.

A tower $\m{F}=\{F_i\}_{i=0}^{\infty}$ of function fields over a perfect field $K$ of positive characteristic is called \emph{tame} if the ramification index $e(Q|P)$ of any place $Q$ of $F_{i+1}$ lying above a place $P$ of $F_i$ is relatively prime to the characteristic of $K$ for all $i\geq 0$. Otherwise the tower $\m{F}$ is called \emph{wild}.

 The set of places of a function field $F$ over $K$ will be denoted by $\P(F)$.

The following definitions are important when dealing with the asymptotic behavior of a tower. Let $\mathcal{F}=\{F_i\}_{i=0}^{\infty}$ be a  tower of function fields
over a finite field $\ff_q$ with $q$ elements.
The {\em splitting rate} $\nu(\m{F})$ and the {\em genus} $\gamma(\m{F})$ of $\m{F}$ over $F_0$ are defined, respectively, as
$$\nu(\m{F})\colon=\lim_{i\rightarrow \infty}\frac{N(F_i)}{[F_i:F_0]}\,, \qquad\gamma(\m{F})\colon=\lim_{i\rightarrow \infty}\frac{g(F_i)}{[F_i:F_0]}\,.$$
If $g(F_i)\geq 2$
for $i\geq i_0\geq 0,$ the {\em limit} $\lambda(\m{F})$ of $\m{F}$ is defined as $$\lambda(\m{F})\colon=\lim_{i\rightarrow \infty}\frac{N(F_i)}{g(F_i)}\,.$$
It can be seen that all the above limits exist and that $\lambda(\m{F})\geq 0$ (see \cite[Chapter 7]{Stichbook09}).

Note that the definition of the genus of $\m{F}$ makes sense also in the case of a tower $\m{F}$ of function fields over a perfect field $K$.

We shall say that a tower $\mathcal{F}=\{F_i\}_{i=0}^{\infty}$ of function fields over $\ff_q$ is {\em asymptotically good} if $\nu(\m{F})>0$ and $\gamma(\m{F})<\infty$. If either $\nu(\m{F})=0$ or $\gamma(\m{F})=\infty$ we shall say that $\m{F}$ is {\em asymptotically bad}.

From the well-known Hurwitz genus formula
(see \cite[Theorem 3.4.13]{Stichbook09}) we see that the condition $g(F_i)\geq 2$ for $i\geq i_0$ in the definition of $\lambda(\m{F})$ implies that $g(F_i)\rightarrow \infty$ as $i\rightarrow \infty$.
Hence, when  we speak of the limit of a tower of function fields we are actually speaking of the limit of a tower in the sense of Garcia and Stichtenoth (see \cite[Section 7.2]{Stichbook09}).

It is easy to check that in the case of a tower $\m{F}$ we have that $\m{F}$ is asymptotically good if and only if $\lambda(\m{F})>0$. Therefore a tower $\m{F}$ is  asymptotically bad if and only if $\lambda(\m{F})=0$.

\section{The genus of a tower}\label{genus}

As we mentioned in the introduction, a simple and useful condition implying that $H \in \ff_q[x,y]$ does not give rise to an asymptotically good recursive tower  $\m{F}$ of function fields over $\ff_q$ is that $\deg_xH\neq\deg_yH$. With this situation in mind
we shall say that a recursive tower $\m{F}=\{F_i\}_{i=0}^{\infty}$ of function fields over a perfect field $K$ defined by a polynomial $H \in K[x,y]$ is {\em non skew} if $\deg_xH=\deg_yH$. In the skew case (i.e. $\deg_xH\neq \deg_yH$) we might have that $[F_{i+1}:F_i]\geq 2$ for all $i\geq 0$ and
even that $g(F_i)\rightarrow \infty$ as $i\rightarrow\infty$ but, nevertheless, $\m{F}$ will be asymptotically bad. What happens is that if $\deg_yH>\deg_xH$ then
the splitting rate  $\nu(\m{F})$ is zero (this situation makes sense in the case $K=\ff_q)$ and if  $\deg_xH>\deg_yH$ the genus $\gamma(\m{F})$ is infinite (see \cite{GS07} for details). Therefore the study of good asymptotic behavior in the case of recursive towers must be focused on non skew towers. Since the splitting rate of recursive towers defined by an equation of the form \eqref{kummergst97} is positive, their good asymptotic behavior is determined by their genus.

From now on $K$ will denote a perfect field and we recall that $K$ is assumed to be the full  field of constants of each function field $F_i$ of any given tower $\m{F}$ over $K$.  We recall a well-known formula for the genus of a tower  $\m{F}=\{F_i\}_{i=0}^{\infty}$ in terms of a subtower $\m{F}'=\{F_{s_i}\}_{i=1}^\infty$, namely
\begin{equation}\label{ecu1paper3}
\gamma(\m{F})=
\lim_{i \rightarrow \infty}\frac {g(F_{s_i})}{[F_{s_i}:F_0]}=g(F_0)-1+\frac 1 2 \sum_{i=1}^\infty
\frac{\deg{\diff(F_{s_{i+1}}/F_{s_i})}}{[F_{s_{i+1}}:F_0]}.\end{equation}
\begin{rem}\label{remarkdivisor}
Suppose now that there exist positive
functions  $c_1(t)$ and $c_2(t)$, defined for $t\geq 0$, and  a divisor  $B_i\in \m{D}(F_i)$ such that for each $i\geq 1$
 \begin{enumerate}[\text{Condition} (a):]
\item $\deg B_i\geq c_1(i)[F_i:F_0]$ and\label{thm3.2-a}
 \item $\sum\limits_{P\in supp(B_i)}\sum\limits_{Q|P}d(Q|P)\deg Q\geq c_2(i)[F_{{i+1}}\colon F_i]\deg{B_i}\,,$ \label{thm3.2-b}
 \end{enumerate}
where the inner sum runs over all places $Q$ of $F_{i+1}$ lying above $P$, then it is easy to see from \eqref{ecu1paper3} that if the series
\begin{equation}\label{thm3.2-c}
  \sum_{i=1}^{\infty}c_1(i)c_2(i)
\end{equation} is divergent then $\gamma(\m{F})=\infty$.
\end{rem}

  With the same hypotheses as in Remark~\ref{remarkdivisor}, if in addition $\m{F}=\{F_i\}_{i=0}^{\infty}$ is non skew and  recursively defined by the equation $H(x,y)=0$ such that $H(x,y)$, as a polynomial with coefficients in $K(y)$, is irreducible in $K(y)[x]$ then condition \eqref{thm3.2-a}
  can be replaced by the following
\begin{enumerate}[(a')]
\item $\deg{B_{j}}\geq c_1(j)\cdot \deg(b(x_{j}))^{j}$ where $b\in K(T)$ is a rational function and $(b(x_{j}))^{j}$ denotes either the pole divisor or the zero divisor of $b(x_{j})$ in $F_{j}$,\label{thm3.2-a'}
\end{enumerate}
and the same result hold, i.e., $\gamma(\m{F})=\infty$. These are the usual ways of proving the infiniteness of the genus of a recursive tower $\m{F}$.

In particular the existence of a divisor as in Remark \ref{remarkdivisor}  can be proved by showing that sufficiently many places of $F_i$ are ramified in  $F_{i+1}$ in the  sense that the number $r_i=\#(R_i)$ where
\[R_i=\{P\in\P(F_i)\,:\,\text{$P$ is ramified in $F_{i+1}$}\}\,.\] satisfies the estimate
\[r_i\geq c_i[F_{s_{i+1}}:F_0]\,,\]
where $c_i>0$ for $i\geq 1$ and the series
$\sum_{i=1}^{\infty}c_i$
is divergent. It is easily seen that the divisor of $F_i$
\[B_i=\sum_{P\in R_i}P\,,\]
satisfies the conditions \eqref{thm3.2-a} and \eqref{thm3.2-b} of Remark \ref{remarkdivisor}
with $c_1(i)= c_i[F_{i+1}:F_i]$ and $c_2(i)=[F_{i+1}:F_i]^{-1}$.

We recall now a standard result from the theory of constant field extensions (see \cite[Theorem 3.6.3]{Stichbook09}): let $\m{F}=\{F_i\}_{i=0}^{\infty}$ be a tower of function fields over $K$. By considering the constant field extensions $\bar{F_i}=F_i\cdot K'$  where $K'$ is an algebraic closure of $K$, we have the so called constant field extension tower $\bar{\m{F}}=\{\bar{F_i}\}_{i=0}^{\infty}$ of function fields over $K'$ and
\[\gamma(\m{F})=\gamma(\bar{\m{F}})\,.\]
Now we can prove the following result which will be useful later.
\begin{pro}\label{propmajwulf} Let $\m{F}=\{F_i\}_{i=0}^{\infty}$ be  a tower of function fields over $K$. Suppose that either each extension $F_{i+1}/F_i$ is Galois or that there exists a constant $M$ such that $[F_{i+1}:F_i]\leq M$ for $i\geq 0$. In order to have infinite genus it suffices to find, for infinitely many indices $i\geq 1$, a place $P_i$ of $F_0$ unramified in $F_i$ and such that each place of $F_i$ lying above $P_i$ is ramified in $F_{i+1}$.

In particular, suppose that the tower $\m{F}=\{F_i\}_{i=0}^{\infty}$ is a non skew recursive tower defined by a suitable polynomial $H\in K[x,y]$. Let   $\{x_i\}_{i=0}^{\infty}$ be a sequence of transcendental elements over $K$ such that $F_{i+1}=F_i(x_{i+1})$ where $H(x_{i+1},x_i)=0$. Then $\gamma(\m{F})=\infty$ if
\begin{enumerate}[(i)]
\item $H$, as a polynomial with coefficients in $K(y)$, is irreducible in $K(y)[x]$.
\item There exists an index $k\geq 0$  such that for infinitely many indices $i\geq 0$ there is a place $P_i$ of  $K(x_{i-k},\ldots,x_i)$ which is unramified in $F_i$ and each place of $F_i$ lying above $P_i$ is ramified in $F_{i+1}$.
\end{enumerate}
\end{pro}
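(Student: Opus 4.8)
The plan is to read off $\gamma(\m{F})=\infty$ directly from the genus formula \eqref{ecu1paper3} applied to the full tower,
\[ \gamma(\m{F})=g(F_0)-1+\frac12\sum_{i\ge 0}\frac{\deg{\diff(F_{i+1}/F_i)}}{[F_{i+1}:F_0]}. \]
All summands are nonnegative, so it suffices to exhibit infinitely many of them that are bounded below by one fixed positive constant. Thus the whole argument reduces to converting the ramification hypothesis into a lower bound for $\deg{\diff(F_{i+1}/F_i)}$ at the distinguished indices $i$.

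For the first assertion I would fix such an index $i$, take the given place $P_i$ of $F_0$, and let $Q_1,\dots,Q_r$ be the places of $F_i$ above $P_i$. Since $P_i$ is unramified in the separable extension $F_i/F_0$, the fundamental identity gives $\sum_j\deg{Q_j}=\deg{P_i}\,[F_i:F_0]$. If the degrees $[F_{i+1}:F_i]$ are bounded by $M$, I would use only that each $Q_j$ ramifies in $F_{i+1}$: by Dedekind's different theorem there is a place $Q'\mid Q_j$ with $d(Q'\mid Q_j)\ge 1$, so the places above $Q_j$ contribute at least $\deg{Q_j}$ to the different; summing yields $\deg{\diff(F_{i+1}/F_i)}\ge\deg{P_i}\,[F_i:F_0]$, whence the $i$-th summand is at least $\deg{P_i}/[F_{i+1}:F_i]\ge 1/M$. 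In the Galois case the degrees need not be bounded, so instead I would exploit that in a Galois (more generally normal) extension all places above a ramified $Q_j$ share a common ramification index $e\ge2$ and residue degree $f$, there being $[F_{i+1}:F_i]/(ef)$ of them; their total different contribution is then at least $\tfrac{e-1}{e}[F_{i+1}:F_i]\deg{Q_j}\ge\tfrac12[F_{i+1}:F_i]\deg{Q_j}$. Summing over $j$ and dividing by $[F_{i+1}:F_0]=[F_{i+1}:F_i][F_i:F_0]$ leaves the $i$-th summand at least $\tfrac12\deg{P_i}\ge\tfrac12$. Either way infinitely many summands exceed a fixed positive constant, the series diverges, and $\gamma(\m{F})=\infty$.

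For the ``in particular'' I would run the same different estimate, but re-based at the sliding window $E_i:=K(x_{i-k},\dots,x_i)$ instead of at $F_0$. Writing $n=\deg_xH=\deg_yH$, condition~(i) forces every step to have full degree, so $[F_{i+1}:F_i]=n$ and $[F_i:F_0]=n^i$; in particular the extension degrees are bounded by $n$, which is morally the bounded-degree case above. Now let $P_i$ be the place of $E_i$ furnished by~(ii) with the places $Q_1,\dots,Q_r$ of $F_i$ above it. As before, $P_i$ unramified in $F_i/E_i$ gives $\sum_j\deg{Q_j}=\deg{P_i}\,[F_i:E_i]$, and each $Q_j$ ramifying in $F_{i+1}$ gives $\deg{\diff(F_{i+1}/F_i)}\ge\deg{P_i}\,[F_i:E_i]$. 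The crucial bookkeeping is the value of $[F_i:E_i]$: using $K(x_i)$ as a common subfield of $E_i$ and $F_i$ and the non-degeneracy of the tower read from its top variable (the reversed relation $H(y,x)=0$, which by non-skewness and Gauss's lemma is, like~(i), irreducible, now in $K(x)[y]$), I would get $[F_i:K(x_i)]=n^i$ and $[E_i:K(x_i)]=n^k$, hence $[F_i:E_i]=n^{i-k}$. Therefore the $i$-th summand is at least $\deg{P_i}\,n^{i-k}/n^{i+1}=\deg{P_i}/n^{k+1}\ge 1/n^{k+1}$, a constant independent of $i$, and with infinitely many such indices the genus series again diverges.

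The routine inputs are the fundamental identity and the bound $d(Q'\mid Q_j)\ge e-1$; the one genuinely delicate point is the degree bookkeeping in the second part. Everything there hinges on the tower and each of its windows being non-degenerate, i.e.\ on the adjoined minimal polynomials retaining their full degree over the larger fields $F_i$ and $K(x_i)$ rather than merely over $K(x_j)$. This is exactly what condition~(i), together with non-skewness (giving irreducibility in both $K(y)[x]$ and $K(x)[y]$), is meant to guarantee, and I expect establishing the equalities $[F_i:K(x_i)]=n^i$ and $[E_i:K(x_i)]=n^k$ --- the genuine degrees, not merely divisibilities --- to be where the real care is needed. By contrast the Galois-case estimate of the first part, where the factor $[F_{i+1}:F_i]$ is recovered from the uniform ramification index, is the only other spot requiring a structural rather than purely numerical observation.
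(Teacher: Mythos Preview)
Your proposal is correct and follows essentially the same route as the paper's proof: bound the different from below by counting the places over the distinguished $P_i$, feed this into the genus series, and in the recursive case do the degree bookkeeping via $K(x_i)\subset E_i\subset F_i$. The only cosmetic difference is that the paper first passes to an algebraic closure of $K$ (so that ``unramified'' becomes ``completely split'' and all degrees are $1$) and phrases the estimate through the divisor conditions of Remark~\ref{remarkdivisor}, whereas you work directly over $K$ with degrees; the paper likewise asserts $[F_i:F_0]=m^i=[F_i:K(x_i)]$ without further comment, so your flagging of this as the delicate step is apt rather than a gap relative to the paper.
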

\begin{proof}
We may assume that $K$ is algebraically closed since, by passing to the constant field tower $\bar{\m{F}}=\{\bar{F_i}\}_{i=0}^{\infty}$ with $\bar{F_i}=F_i\cdot K'$ where $K'$ is an algebraic closure of $K$, we have $\gamma(\m{F})=\gamma(\bar{\m{F}})$. In this situation we have that for each $i\geq 0$ the place $P_i$ of $F_0$ splits completely in $F_i$ and each place $Q$ of $F_i$ lying above $P_i$ ramifies in $F_{i+1}$. Now consider the following sets
\[R_i=\{P\in\P(F_i)\,:\,\text{$P$ is ramified in $F_{i+1}$}\}\,,\]
 and
\[A_i=\{Q\in\P(F_{i+1})\,:\,\text{$Q$ lies over some $P\in R_i$}\}\,.\]
and set $r_i=\#(R_i)$. Let $B_i$ be a divisor of $F_i$ defined as
\[B_i=\sum_{P\in R_i}P\,.\]
Then $\deg B_i \geq r_i\geq [F_i:F_0]$, because every place $Q$ of $F_i$ lying above $P_i$ is in $R_i$ and $P_i$ splits completely in $F_i$, so that condition \eqref{thm3.2-a} of Remark \ref{remarkdivisor} holds with $c_1(i)=1$.

Now suppose that each extension $F_{i+1}/F_i$ is Galois. Then $A_i$ is the set of all places of $F_{i+1}$ lying above a place of $R_i$. Therefore
\begin{align*}
   \sum_{P\in supp(B_i)}\underset{Q|P}{\sum_{Q\in\P(F_{i+1})}}d(Q|P)\deg Q & \geq \sum_{P\in R_i}\sum_{Q\in A_i}d(Q|P)\deg Q\\ & \geq  \frac{1}{2}\sum_{P\in R_i}\sum_{Q\in A_i}e(Q|P)f(Q|P)\deg P\\
    &=\frac{1}{2}[F_{i+1}:F_i]\, \sum_{P\in R_i}\deg P\\
    &\geq \frac{1}{2}[F_{i+1}:F_i]\,\deg B_i \,.
\end{align*}
Then  condition \eqref{thm3.2-b} of Remark \ref{remarkdivisor} holds with $c_2(i)=1/2$ and the series $\sum_{i=1}^{\infty}c_1(i)c_2(i)$ is divergent. Hence $\gamma(\m{F})=\infty$.
In the case that $[F_{i+1}:F_i]\leq M$ for $i\geq 0$ by taking $c_2(i)=M^{-1}$
we arrive to the same conclusion.

Finally suppose that the tower $\m{F}=\{F_i\}_{i=0}^{\infty}$ is  non skew and recursive. Since $\m{F}$ is non skew and $(i)$ holds, we have that $[F_i:F_0]=m^i=[F_i:K(x_i)]$ where $m=\deg_yH=\deg_xH$. Now we proceed with the same divisor $B_i$ as defined above using $(ii)$. We have that
\[\deg B_i\geq [F_i:K(x_{i-k},\ldots,x_i)]=m^{-k}[F_i:K(x_i)]=m^{-k}[F_i:F_0]\,,\]
so that by taking $c_1(i)=m^{-k}$ and $c_2(i)=m^{-k-1}$ we have the desired conclusion.
\end{proof}

An example of the  situation described in the second part of Proposition \ref{propmajwulf} for $k=0$ was given in Lemma 3.2 in \cite{MaWu05} and applied to the non skew Kummer tower
\[y^3=1-\left(\frac{x-1}{x+1}\right)^3\,,\]
over $\ff_p$ with $p\equiv 1,7\mod 12$. Unfortunately there is a mistake in the proof as we show now. The basic function field associated to that tower is $\ff_p(x,y)$ and both extensions $\ff_p(x,y)/\ff_p(x)$ and $\ff_p(x,y)/\ff_p(y)$ are Galois. The key part of the argument is that $-3^{-1}$ is not a square in $\ff_p$ with $p\equiv 1,7\mod 12$. With this we would have that the polynomial $x^2+3^{-1}$ is irreducible in $\ff_p[x]$ and then it would define the place $P_{x^2+3^{-1}}$ of $\ff_p(x)$ which is not only totally ramified in $\ff_p(x,y)$ (by the theory of Kummer extensions) but also of degree $2$, which is crucial for their argument. From these facts the authors deduce that the above equation defines a tower in the sense of Garcia and Stichtenoth with infinite genus. But any such prime is congruent to $1$ modulo $3$ and  $-3^{-1}$ is a square in $\ff_p$ for  $p\equiv 1\mod 3$ as can be easily seen using the quadratic reciprocity law. Thus the polynomial $x^2+3^{-1}$ is not irreducible in $\ff_p[x]$ so it does not define a place of $\ff_p(x)$.

\section{Climbing the pyramid}\label{pyramid}

In this section and the next one  we shall use the following convention: a place defined by a monic and irreducible polynomial $f\in K[x]$ in a rational function field $K(x)$ will be denoted by $P_{f(x)}$. A slight modification of the arguments given in Lemma 3.2 of \cite{MaWu05} allowed us to prove the following useful criterion for infinite genus in the case of recursive towers and we include the proof for the sake of completeness. The main difficulty on the applicability of Lemma 3.2 of \cite{MaWu05} is that it requires that both extensions $K(x,y)/K(x)$ and $K(x,y)/K(y)$ be Galois, which is something unusual or simply hard to prove. Getting rid of the condition $K(x,y)/K(y)$ being Galois was the key ingredient in proving the main result in the next section.
\begin{pro}\label{examplemawu} Let $\m{F}=\{F_i\}_{i=0}^{\infty}$ be a non skew recursive tower of function fields over $K$ defined by a polynomial $H\in K[x,y]$ with the same degree $m$ in both variables.  Let $K(x,y)$ be the basic function field associated to $\m{F}$ and consider the set
$$N=\{\deg{R} : R\in \P(K(y)) \text{ and $R$ is ramified in $K(x,y)$}\} \,. $$
Let $d\in \mathbb{N}$ such that $\gcd(d,m)=1$ and $n\ncon 0\mod d$ for all $n\in N$.
Suppose that there is a place $P$ of $K(x)$ with the following properties:
\newcounter{saveenum}
\begin{enumerate}[(a)]
\item $\deg{P}=d$ and\label{item-a-mawu}
\item $P$ is ramified in $K(x,y)$.\label{item-b-mawu}
\setcounter{saveenum}{\value{enumi}}
\end{enumerate}
Then $\gamma(F)=\infty$ if $K(x,y)/K(x)$ is a Galois extension and $H$, as a polynomial with coefficients in $K(y)$, is irreducible in $K(y)[x]$.
\end{pro}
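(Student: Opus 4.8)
The plan is to deduce the statement from the second part of Proposition~\ref{propmajwulf} with $k=0$. Its hypothesis (i) is part of our assumptions, so it suffices to verify hypothesis (ii) for $k=0$: to produce, for every $i\geq 1$, a place $P_i$ of $K(x_i)$ that is unramified in $F_i$ and such that each place of $F_i$ above $P_i$ ramifies in $F_{i+1}$. I would take $P_i$ to be the place of $K(x_i)$ corresponding to $P$ under the identification of the ascending step $K(x_i,x_{i+1})/K(x_i)$ with the Galois extension $K(x,y)/K(x)$; then $\deg{P_i}=d$ and $P_i$ is ramified in $K(x_i,x_{i+1})/K(x_i)$. With the same normalisation the one-step extensions occurring inside $F_i$, namely $K(x_{j-1},x_j)/K(x_j)$ for $1\leq j\leq i$, are copies of $K(x,y)/K(y)$, so a place of $K(x_j)$ can ramify in such a step only when its degree lies in $N$; note also that $d\notin N$, since $d\in N$ would contradict $d\nmid n$ for $n\in N$.

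That every place $Q$ of $F_i$ over $P_i$ ramifies in $F_{i+1}$ is the softer half, and will follow once $P_i$ is known to be unramified in $F_i$. Writing $F_{i+1}=F_i\cdot K(x_i,x_{i+1})$ as a compositum over $K(x_i)$, the factor $K(x_i,x_{i+1})/K(x_i)$ is Galois with $P_i$ ramified, so every place above $P_i$ in it has ramification index $e_0>1$. Passing to completions and using that $F_i/K(x_i)$ is unramified at $P_i$, the entire ramification at a place $\tilde Q$ of $F_{i+1}$ over $Q$ comes from the Galois factor, whence $e(\tilde Q|Q)=e_0>1$. No tameness is needed here, precisely because the $F_i$-side is unramified.

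The heart of the proof, and the step I expect to be the main obstacle, is to show that $P_i$ is unramified in $F_i$. I would fix a place $\Theta$ of $F_i$ over $P_i$ and study its restrictions $P_i^{(j)}=\Theta\cap K(x_j)$ for $0\leq j\leq i$, proving that $d\mid \deg{P_i^{(j)}}$ for all $j$ by descending induction from $j=i$, where $\deg{P_i^{(i)}}=\deg{P_i}=d$. For the inductive step let $\theta=\Theta\cap K(x_{j-1},x_j)$ be the induced place of the basic function field $K(x_{j-1},x_j)\cong K(x,y)$. On one hand $\theta$ lies over $P_i^{(j)}$, so $\deg{P_i^{(j)}}$, and hence $d$, divides $\deg{\theta}$; on the other hand $\theta$ lies over $P_i^{(j-1)}$ in the \emph{Galois} extension $K(x_{j-1},x_j)/K(x_{j-1})\cong K(x,y)/K(x)$, so the residue degree $f(\theta\,|\,P_i^{(j-1)})$ divides $m$ and is therefore coprime to $d$ by $\gcd(d,m)=1$. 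From $d\mid\deg{\theta}=f(\theta\,|\,P_i^{(j-1)})\cdot\deg{P_i^{(j-1)}}$ one concludes $d\mid\deg{P_i^{(j-1)}}$, completing the induction. This is exactly where both arithmetic hypotheses enter: $\gcd(d,m)=1$ supplies the coprimality of residue degrees through the Galois projection $K(x,y)/K(x)$, and then $d\mid\deg{P_i^{(j)}}$ forces $\deg{P_i^{(j)}}\notin N$, so that $P_i^{(j)}$ is unramified in $K(x_{j-1},x_j)/K(x_j)\cong K(x,y)/K(y)$.

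It remains to assemble these facts. Filtering $F_i$ from the top by $G_j=K(x_j,\dots,x_i)$, each step is a compositum $G_{j-1}=G_j\cdot K(x_{j-1},x_j)$ over $K(x_j)$, and the factor $K(x_{j-1},x_j)/K(x_j)$ is unramified at $P_i^{(j)}=\Theta\cap K(x_j)$ by the previous paragraph; since unramifiedness is stable under base change, $G_{j-1}/G_j$ is unramified at $\Theta\cap G_j$. Multiplying the ramification indices along the filtration gives $e(\Theta|P_i)=1$, and as $\Theta$ was arbitrary, $P_i$ is unramified in $F_i$. Having verified both conditions for all $i\geq 1$, Proposition~\ref{propmajwulf} yields $\gamma(\m{F})=\infty$.
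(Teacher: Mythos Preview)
Your proposal is correct and follows essentially the same route as the paper: reduce to Proposition~\ref{propmajwulf} with $k=0$, propagate the divisibility $d\mid\deg{P_i^{(j)}}$ downward using that $K(x_{j-1},x_j)/K(x_{j-1})$ is Galois of degree $m$ with $\gcd(d,m)=1$, and then use the hypothesis on $N$ to conclude that each $P_i^{(j)}$ is unramified in the step $K(x_{j-1},x_j)/K(x_j)$. The only cosmetic difference is that where you invoke ``unramifiedness is stable under base change'' (both to assemble $e(\Theta|P_i)=1$ along the filtration $G_j$ and to get $e(\tilde Q|Q)=e_0$), the paper phrases both conclusions as applications of Abhyankar's Lemma; since one side of each compositum is unramified, these are the same statement.
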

\begin{proof}
Consider a sequence $\{x_i\}_{i=0}^{\infty}$ of transcendental elements over $K$ such that
\[F_0=K(x_0)\quad\text{and}\quad F_{i+1}=F_i(x_{i+1})\,,\]
where $H(x_i,x_{i+1})=0$ for $i\geq 0$. Let $i\geq 1$. By the above assumptions there is a place $P_i$ of $K(x_i)$ ramified in the extension $K(x_i, x_{i+1})/K(x_i)$ with $\deg{P_i}=d$.  Let $Q$ be a place of $F_i$ lying above $P_i$. Let  $P_0, P_1, \ldots, P_i$ be the restrictions of $Q$ to $K(x_0), K(x_1),\ldots, K(x_i)$ respectively and let $P'_j$ be a place of $K(x_j,x_{j+1})$ lying above $P_j$ for $j=1,\ldots i$ (see Figure \ref{figu5.9} below).

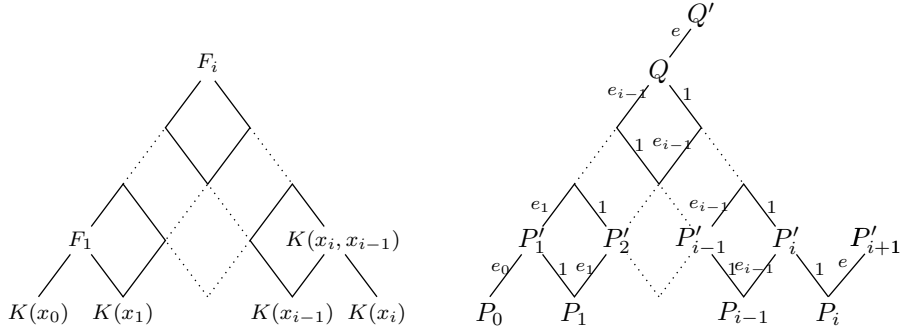
\begin{figure}[h!t]
 \begin{center}
  \begin{tikzpicture}[scale=0.75]
     \node at(0,-0.3){\footnotesize{$K(x_0)$}};
     \node at(1.5,-0.3){\footnotesize{$K(x_1)$}};
  \node at(4.5,-0.3){\footnotesize{$K(x_{i-1})$}};
  \node at(6,-0.3){\footnotesize{$K(x_i)$}};

   \node at(8,-0.3){$P_0$};
     \node at(9.5,-0.3){$P_1$};
  \node at(12.5,-0.3){$P_{i-1}$};
  \node at(14,-0.3){$P_i$};
  \draw[line width=0.5 pt](8,0)--(9.5,2)--(10.25,1)--(9.5,0)--(8.75,1);
  \draw[line width=0.5 pt,dotted](9.5,2)--(10.25,3);
    \draw[line width=0.5 pt,dotted](11,0)--(10.25,1)--(11,2)--(11.75,1)--(11,0);
    \draw[line width=0.5 pt](14,0)--(12.5,2)--(11.75,1)--(12.5,0)--(13.25,1);
\draw[line width=0.5 pt](14,0)--(14.85,1);
\draw[white, fill=white](14.85,1) circle (0.3 cm);
        \node at(14.85,1){$P_{i+1}'$};
      \draw[line width=0.5 pt,dotted](11.75,3)--(12.5,2);
        \draw[line width=0.5 pt](11.75,3)--(11,2)--(10.25,3)--(11.75,5);
\draw[line width=0.5 pt](11,4)--(11.75,3);
         \draw[white, fill=white](5.25,1) circle (0.3 cm);
        \draw[white, fill=white](8.75,1) circle (0.3 cm);
        \node at(8.75,1){$P_1'$};
          \draw[white, fill=white](10.25,1) circle (0.3 cm);
        \node at(10.25,1){$P_2'$};
          \draw[white, fill=white](11.75,1) circle (0.3 cm);
        \node at(11.75,1){$P_{i-1}'$};
         \draw[white, fill=white](13.25,1) circle (0.3 cm);
        \node at(13.25,1){$P_i'$};
        \draw[white, fill=white](11,4) circle (0.3cm);
\draw[white, fill=white](11.75,5) circle (0.3cm);
        \node at(11,4){$Q$};
\node at(11.75,5){$Q'$};
\draw[white, fill=white](14.85,1) circle (0.3 cm);
        \node at(14.85,1){$P_{i+1}'$};
        \node at(13.85,0.5){\scriptsize{$1$}};
         \node at(12.7,0.5){\scriptsize{$e_{i-1}$}};
          \node at(12.3,0.5){\scriptsize{$1$}};
          \node at(9.3,0.5){\scriptsize{$1$}};
         \node at(9.7,0.5){\scriptsize{$e_1$}};
         \node at(8.2,0.5){\scriptsize{$e_0$}};
          \node at(8.9,1.6){\scriptsize{$e_1$}};
           \node at(10,1.6){\scriptsize{$1$}};
           \node at(11.9,1.6){\scriptsize{$e_{i-1}$}};
           \node at(13,1.6){\scriptsize{$1$}};
           \node at(11.25,2.7){\scriptsize{$e_{i-1}$}};
           \node at(10.65,2.7){\scriptsize{$1$}};
            \node at(10.45,3.6){\scriptsize{$e_{i-1}$}};
           \node at(11.5,3.6){\scriptsize{$1$}};
\node at(11.3,4.7){\scriptsize{$e$}};
\node at(14.26,0.6){\scriptsize{$e$}};

 \draw[line width=0.5 pt](0,0)--(1.5,2)--(2.25,1)--(1.5,0)--(0.75,1);
  \draw[line width=0.5 pt,dotted](1.5,2)--(2.25,3);
    \draw[line width=0.5 pt,dotted](3,0)--(2.25,1)--(3,2)--(3.75,1)--(3,0);
    \draw[line width=0.5 pt](6,0)--(4.5,2)--(3.75,1)--(4.5,0)--(5.25,1);
      \draw[line width=0.5 pt,dotted](3.75,3)--(4.5,2);
        \draw[line width=0.5 pt](3.75,3)--(3,2)--(2.25,3)--(3,4)--(3.75,3);

        \draw[white, fill=white](0.75,1) circle (0.3 cm);
        \node at(0.75,1){\footnotesize{$F_1$}};
         \draw[white, fill=white](3,4) circle (0.2 cm);
        \node at(3,4.15){\footnotesize{$F_i$}};
         \draw[white, fill=white](5.25,1) circle (0.3 cm);
        \node at(5.4,1){\footnotesize{$K(x_i,x_{i-1})$}};

  \end{tikzpicture}
  \caption{Ramification of $P_0, P_1,\ldots P_i$ in the pyramid.}\label{figu5.9}
\end{center}\end{figure}

By hypothesis we have that $e(P'_i|P_i)=1$. On the other hand
\begin{equation}\label{inertiadegree}
f(P_j'|P_j)\deg{P_j}=\deg{P_j'}=f(P_j'|P_{j-1})\deg{P_{j-1}}\,,
\end{equation}
 for $1\leq j\leq i $ where $f(P_j'|P_j)$ and $f(P_j'|P_{j-1})$ are the respective inertia degrees. Since $d=\deg P_i$ and $\gcd(d,m)=1$ from \eqref{inertiadegree} for $j=i$ we must have that $d$ is a divisor of $\deg P_{i-1}$, otherwise there would be a prime factor of $d$ dividing $m$ because $K(x_{i-1},x_i)/K(x_{i-1})$ is Galois and in this case $f(P_i'|P_{i-1})$ is a divisor of $m$.
Continuing in this way using \eqref{inertiadegree} we see that $d$ is a divisor of $\deg P_j$ for $j=1,\ldots i$ and this implies, by hypothesis, that each place $P_j$ is unramified in the extension $K(x_{j-1},x_j)/K(x_j)$ for $j=1,\ldots i$.

We have now a ramification situation as in Figure \ref{figu5.9} below. By Abhyankar's Lemma (see \cite[Theorem 3.9.1]{Stichbook09}) it follows that $e(Q|P_i)=1$. Now let $Q'$ be a place of $F_{i+1}$ lying above $Q$ and let $P_{i+1}'$ be the restriction of $Q'$ to  $K(x_i,x_{i+1})$. Then $P_{i+1}'$ lies above $P_i$ and $e(P_{i+1}'|P_i)=e>1$ because $P_i$ is ramified in $K(x_i,x_{i+1})$ and the extension $K(x_i,x_{i+1})/K(x_i)$ is Galois. Once again, by Abhyankar's Lemma, we have that  $e(Q'|Q)=e(P'_{i+1}|P_i)>1$. Then we are in the conditions $(i)$ and $(ii)$ of Proposition \ref{propmajwulf} with $k=0$ and thus $\gamma(\m{F})=\infty$.

\end{proof}

\begin{rem}\label{remark-thm4-2}
Note that if we have  a ramification situation as in Figure \ref{figu5.9} above and $P_i$ is totally ramified in $K(x_i,x_{i+1})$ for all $i\geq 0$ then $Q$ is totally ramified in $F_{i+1}$ for all $i\geq 0$ because $e=[K(x_i,x_{i+1}):K(x_i)]=[F_{i+1}:F_i]$. Therefore if a recursive sequence $\m{F}$ of function fields is defined by a separable polynomial $H(x,y)$ in the second variable and for each $i\geq 0$ we have a ramification situation as in Figure \ref{figu5.9} and $P_i$ is totally ramified in $K(x_i,x_{i+1})$ for all $i\geq 0$ then $K$ is the full field of constants of each $F_i$ so that  $\m{F}$ is, in fact, a tower.
\end{rem}

\section{Classification of asymptotically good cubic towers of Garcia, Stichtenoth and Thomas type}\label{examples}
 We prove now our main result. As we said in the introduction Garcia, Stichtenoth and Thomas introduced in \cite{GST97} an interesting class of Kummer type towers  over a finite field $\ff_q$ with $q\equiv 1\mod m$ defined by an equation of the form
\begin{equation}
y^m=x^df(x)\,,
\end{equation}
where $f(x)$ is a polynomial of degree $m-d$ such that $f(0)\neq 0$ and $\gcd(d,m)=1$. These Kummer type towers have positive splitting rate  but over prime fields Lenstra \cite{Le02} showed that they fail to satisfy a well-known criterion for finite ramification locus given in \cite{GST97} which is the main tool in proving the finiteness of their genus. In this context the next result is important in the study of the cubic case of these Kummer type towers.

\begin{thm}\label{teoexe2}
  Let $p$ be a prime number and let $q=p^r$ with $r\in \na$ such that $q\equiv 1 \mod 3$. Let $f(t)=t^2+bt+c \in \ff_q[t]$ be a polynomial such that $t=0$ is not a double root. Let $\mathcal{F}$ be a Kummer type tower over $\ff_q$ recursively defined by the equation
  \begin{equation}\label{badkummerexample}
  y^3=xf(x)\,.
  \end{equation}
  If $\mathcal{F}$ is asymptotically good then the polynomial $f$ splits into linear factors over $\ff_q$. This implies that any asymptotically good tower recursively defined by \eqref{badkummerexample} is of one and only one of the following three types:
\begin{enumerate}[Type 1.]
\item  Recursively defined by $y^3=x(x+\alpha)(x+\beta)$ with non zero $\alpha\neq\beta\in\ff_q$. \label{a-teoexe2}
\item Recursively defined by $y^3=x^2(x+\alpha)$ with non zero $\alpha\in\ff_q$.  \label{b-teoexe2}
\item Recursively defined by $y^3=x(x+\alpha)^2$ with non zero $\alpha\in\ff_q$. \label{c-teoexe2}
\end{enumerate}
\end{thm}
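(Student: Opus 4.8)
The plan is to prove the contrapositive: if $f$ does not split into linear factors over $\ff_q$ then $\gamma(\m{F})=\infty$. Since a tower given by \eqref{badkummerexample} has positive splitting rate, $\m{F}$ is asymptotically good if and only if $\gamma(\m{F})<\infty$, so it is enough to produce infinite genus. As $\deg{f}=2$, not splitting means $f$ is irreducible over $\ff_q$; in particular $c=f(0)\neq0$, and because $-3$ is a square in $\ff_q$ (indeed $\zeta_3\in\ff_q$ gives $(1+2\zeta_3)^2=-3$) irreducibility also forces $b^2\neq3c$ (otherwise $b^2-4c=-c$ would be a square). The basic function field is $\ff_q(x,y)$ with $y^3=xf(x)=x^3+bx^2+cx$, and I would apply Proposition~\ref{examplemawu} with $m=3$. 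Its structural hypotheses are quickly checked: the tower is non skew, since $H=X^3+bX^2+cX-y^3$ has degree $3$ in each variable; the extension $\ff_q(x,y)/\ff_q(x)$ is Kummer, hence Galois, of degree $3$, because $\zeta_3\in\ff_q$ and $xf(x)$ is not a cube in $\ff_q(x)$; and $H$ is irreducible in $\ff_q(y)[X]$, for a root in $\ff_q(y)$ would be a degree-one $\phi$ with $\phi^3+b\phi^2+c\phi=y^3$, which a leading-coefficient and degree count excludes once $b^2\neq3c$.

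Next I would analyse the Galois ($x$-)side. For the Kummer extension $y^3=xf(x)$, a place of $\ff_q(x)$ is ramified exactly when the valuation of $xf(x)$ is prime to $3$; hence the ramified places are $P_x$, of degree $1$, and $P_{f(x)}$, of degree $2$ (this is where irreducibility of $f$ enters, producing a degree-$2$ place), while $P_\infty$ is unramified because $v_\infty(xf(x))=-3$. I would therefore take $d=2$ and $P=P_{f(x)}$: then $\gcd(d,3)=1$, $\deg{P}=d$, and $P$ is ramified in $\ff_q(x,y)$. By Proposition~\ref{examplemawu} it then remains only to show that the set $N$ of degrees of ramified places of $\ff_q(y)$ contains no even number, i.e. that every ramified place on the $y$-side has odd degree.

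The crux is the computation of $N$ on the non-Galois $y$-side. A place of $\ff_q(y)$ ramifies in $\ff_q(x,y)$ precisely where the discriminant with respect to $X$ of $X^3+bX^2+cX-y^3$ vanishes ($y=\infty$ being unramified, again because $v_\infty(xf(x))=-3$); these zeros are the cube roots of the two critical values $w_1,w_2$ of the cubic $x\mapsto x^3+bx^2+cx$. A direct computation writes this discriminant as $-27(y^3-w_1)(y^3-w_2)$, a quadratic in $w=y^3$ whose own discriminant equals $16(b^2-3c)^3$. As $16(b^2-3c)^3$ is a square in $\ff_q$ if and only if $b^2-3c$ is, we get $w_1,w_2\in\ff_q$ exactly when $b^2-3c$ is a square. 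In that case each $w_i\in\ff_q^{\times}$ is either a cube in $\ff_q$ (its three cube roots then lie in $\ff_q$, of degree $1$) or a non-cube (then $X^3-w_i$ is irreducible, of degree $3$), so $N\subseteq\{1,3\}$ is entirely odd and Proposition~\ref{examplemawu} yields $\gamma(\m{F})=\infty$.

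The main obstacle is the complementary case, in which $b^2-3c$ is a non-square while $f$ is nonetheless irreducible; this genuinely occurs (e.g. $q=7$, $f(t)=t^2+t+3$, where $b^2-3c=-8\equiv6\pmod7$ is a non-square). Then $w_1,w_2$ are conjugate over $\ff_q$, so $\ff_q(\eta)\supseteq\ff_q(\eta^3)=\ff_{q^2}$ for every branch point $\eta$, whence every ramified place of $\ff_q(y)$ has even degree and $N\subseteq\{2,6\}$. Since the only ramified degrees available on the $x$-side are $1$ and $2$, no $d$ satisfies the hypotheses of Proposition~\ref{examplemawu}, and that proposition no longer applies. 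To treat this case I would invoke the more flexible Proposition~\ref{propmajwulf} and climb the pyramid with $k\geq1$: starting from the degree-one place $P_x$ (always totally ramified going up, as $v(xf(x))=1$ there) I would follow its preimages through the tower and show that the places of $\ff_q(x_{i-k},\dots,x_i)$ thus produced avoid the sideways ramification locus $\{x_j^3\in\{w_1,w_2\}\}$; equivalently, that the iterated preimages of $0$ under $x\mapsto x^3+bx^2+cx$ are never cube roots of $w_1$ or $w_2$. Controlling this backward orbit --- or, alternatively, bounding $\deg{\diff(F_{i+1}/F_i)}$ directly from below via Remark~\ref{remarkdivisor} --- is the principal difficulty of the proof.
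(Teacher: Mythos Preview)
Your strategy coincides with the paper's: apply Proposition~\ref{examplemawu} with $d=2$ and $P=P_{f(x)}$, and show that every place of $\ff_q(y)$ ramified in $F=\ff_q(x,y)$ has odd degree. The paper carries this out not via the discriminant of $H$ but by computing $\diff(F/\ff_q(y))$ through the relation $3y^{2}\,dy=(xf(x))'\,dx=3(x-\beta_{1})(x-\beta_{2})\,dx$, arriving at
\[
\diff(F/\ff_q(y))=\Bigl(\tfrac{(x-\beta_1)(x-\beta_2)}{y^{2}}\Bigr)_{F}+2Q_{1}+2Q_{2},
\]
where $\beta_1,\beta_2$ are the roots of $3x^{2}+2bx+c$. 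It then argues that every place in the support of this different lies either over $P_{\infty}^{x}$ or over ``the rational place $P_{x-\beta_i}$'' of $\ff_q(x)$, and hence has degree $1$ or $3$.

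That last step tacitly assumes $\beta_i\in\ff_q$, i.e.\ that $b^{2}-3c$ is a square in $\ff_q$ --- precisely your first case, where your discriminant computation and the paper's different computation yield the same conclusion $N\subseteq\{1,3\}$. The paper, however, makes no case distinction and supplies no separate argument when $b^{2}-3c$ is a non-square. As your example $q=7$, $f(t)=t^{2}+t+3$ shows, that situation does occur with $f$ irreducible; the zero locus of $3x^{2}+2bx+c$ is then a single degree-$2$ place of $\ff_q(x)$, the places of $F$ above it (which do lie in the support of the different, with coefficient $e-1=1$) have even degree, and indeed $N\subseteq\{2,6\}$, so Proposition~\ref{examplemawu} with $d=2$ is not applicable. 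You have therefore put your finger on a genuine gap that is also present in the paper's own argument: the paper does not treat the case $b^{2}-3c$ a non-square, and your sketched route via Proposition~\ref{propmajwulf} is not pursued there either.
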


\begin{proof}
 On the contrary, suppose that  the polynomial $f$ is irreducible over $\ff_q$. Let us consider the basic function field $F=\ff_q(x,y)$. Since the polynomial $f(x)$ is irreducible in $\ff_q[x]$ we have that the place $P_{f(x)}$ of $\ff_q(x)$ associated to $f(x)$ is of degree $2$ and, by the general theory of Kummer extensions (see \cite[Proposition 6.3.1]{Stichbook09}, $P_{f(x)}$ is totally ramified in $F$. In fact it is easy to see that the genus of $F$ is one and
\[\diff (F/\ff_q(x))=2Q_1+2Q_2\,,\]
where $Q_1$ is the only place of $F$ lying above $P_x$ (the zero of $x$ in $\ff_q(x)$) and $Q_2$ is the only place of $F$ lying above $P_{f(x)}$. Also $Q_1$ is of degree $1$ and $Q_2$ is of degree $2$.

The extension $F/\ff_q(y)$ is of degree $3$ because the polynomial
\[\phi(t)=tf(t)-y^3\in \ff_q(y)[t]\,,\]
is the minimal polynomial of $x$ over $\ff_q(y)$, otherwise $\phi(t)$ would have a root $z\neq y$ in $ \ff_q(y)$ and this would imply that $y$ is algebraic over $ \ff_q$, a contradiction.  Clearly the extension $F/\ff_q(y)$ is tame.

By choosing the place $P_{f(x)}$ of $\ff_q(x)$ we have that items \eqref{item-a-mawu} and \eqref{item-b-mawu} with $d=2$ hold  in Proposition~\ref{examplemawu} so it remains to prove that the integers in the set
$$N=\{\deg{R} : R\in \P(\ff_q(y)) \text{ and $R$ is ramified in $F$}\} \,, $$
are odd integers. We shall use the following notation: for $z\in F$ the symbols $(z)_F$, $(z)_0^F$ and $(z)_{\infty}^F$ denote the principal divisor, the zero divisor and the pole divisor of $z$ in $F$ respectively. Using the well known expression of the different divisor in terms of differentials (see Chapter $4$ of \cite{Stichbook09}) we have that
\begin{align}\label{diffequality}
\begin{split}
\diff (F/\ff_q(y)) &= 2(y)_{\infty}^F+(dy)_F\\
&= 2(y)_{\infty}^F+\left(\frac{f(x) + x f'(x)}{3y^2}\right)_F+(dx)_F\\
&=2(y)_{\infty}^F+\left(\frac{(x-\beta_1)(x-\beta_2)}{y^2}\right)_F-2(x)_{\infty}^F+\diff (F/\ff_q(x))\\
& =2(y)_{\infty}^F+\left(\frac{(x-\beta_1)(x-\beta_2)}{y^2}\right)_F-2(x)_{\infty}^F+2Q_1+2Q_2\,.
\end{split}
\end{align}
We show now that $(y)_{\infty}^F=(x)_{\infty}^F$. Let $Q\in \supp (y)_{\infty}^F$ and let $S=Q\cap \ff_q(x)$. Then
\[3\nu_Q(y)=e(Q|S)((\nu_S(x)+\nu_S(f(x)))\,.\]
Since $\nu_Q(y)<0$ we must have that $S=P_{\infty}^x$, the pole of $x$ in  $\ff_q(x)$. Hence $\nu_Q(y)=-e(Q|P_{\infty}^x)=-1$ because by Kummer theory (see \cite[Proposition 6.3.1]{Stichbook09}) $P_{\infty}^x$ is unramified in $F$. Then
\[-3=\nu_Q(y^3)=\nu_Q(x)+\nu_Q(f(x))\,,\]
and this implies that $\nu_Q(x)<0$. Therefore $-3=3\nu_Q(x)$ and we have $\nu_Q(x)=-1$ which says that $Q\in \supp (x)_{\infty}^F$ and $\nu_Q(\supp (y)_{\infty}^F)=\nu_Q(\supp (x)_{\infty}^F)$.

Reciprocally let $Q\in \supp (x)_{\infty}^F$. Since $\nu_Q(x)<0$ we have
\[3\nu_Q(y)=\nu_Q(x)+\nu_Q(f(x))=3\nu_Q(x)\,,\]
so that $\nu_Q(y)=\nu_Q(x)<0$. If $S=Q\cap \ff_q(x)$ then
\[3\nu_Q(y)=e(Q|S)((\nu_S(x)+\nu_S(f(x)))\,,\]
and we must have again that $S=P_{\infty}^x$. This implies that $\nu_Q(y)=-e(Q|P_{\infty}^x)=-1$. Therefore $Q\in \supp (y)_{\infty}^F$ and $\nu_Q(\supp (x)_{\infty}^F)=\nu_Q(\supp (y)_{\infty}^F)$. Hence $(y)_{\infty}^F=(x)_{\infty}^F$ as claimed.

From \eqref{diffequality} we have now that
\begin{align}\label{diffequalityreduced}
\begin{split}
\diff (F/\ff_q(y))&=\left(\frac{(x-\beta_1)(x-\beta_2)}{y^2}\right)_F+2Q_1+2Q_2\\&=(z)_0^F-(z)_\infty^F+2Q_1+2Q_2\,,
\end{split}
\end{align}
where $z=(x-\beta_1)(x-\beta_2)y^{-2}$.

Let $Q$ be a place of $F$ in the support of $(z)_0^F$. Then $\nu_Q(z)>0$ and thus one of the following two cases can occur:
\begin{enumerate}[(i)]
  \item $\nu_Q(x-\beta_i)>0$ for $i=1$ or $i=2$. In either case $Q$ lies above the rational place $P_{x-\beta_i}$ of $\ff_q(x)$. Since $F/K(x)$ is a Galois extension of degree $3$ and $\deg Q=f(Q|P)\deg P_{x-\beta_i}$ we have that either $\deg Q =1$ or $\deg Q =3$.
  \item $\nu_Q(y)<0$. Let $S=Q\cap \ff_q(x)$. We have $$3\nu_Q(y)=e(S|Q)(\nu_S(x)+\nu_S(f(x)))\,.$$ Since $\nu_S(x)\geq 0$ leads to a contradiction we must have $\nu_S(x)<0$ and thus $S=P_\infty^x$.  The same argument used in (i) above shows that either $\deg Q =1$ or $\deg Q =3$.
\end{enumerate}

Now let $Q$ be a place of $F$ in the support of $(z)_\infty^F$. Then $\nu_Q(z)<0$ and thus one of the following  two cases can occur:
\begin{enumerate}[(a)]
  \item $\nu_Q(x-\beta_i)<0$ for $i=1$ or $i=2$. In either case $\nu_Q(x)<0$ so that $Q$ lies above the place $P_\infty^x$ of $\ff_q(x)$ and the same argument given in (i) above shows that either $\deg Q =1$ or $\deg Q =3$.
  \item $\nu_Q(y)>0$. Let $S=Q\cap \ff_q(x)$. We have
  \begin{equation}\label{case2}
  3\nu_Q(y)=e(S|Q)(\nu_S(x)+\nu_S(f(x)))\,.
  \end{equation}
  Since $\nu_S(x)< 0$ leads to a contradiction we must have that $\nu_S(x)\geq 0$. If $\nu_S(x)> 0$  then $S=P_x$ and so $Q=Q_1$. If $\nu_S(x)=0$ then we must have that $\nu_S(f(x))>0$ because the left hand side of \eqref{case2} is positive. Therefore if $\nu_S(x)=0$ then $S=P_{f(x)}$ and thus $Q=Q_2$.
\end{enumerate}

On the other hand $\nu_{Q_i}(y)=1$ for $i=1,2$ as it is easy to see from the definition of each $Q_i$. Then $\nu_{Q_i}(z)=-2\nu_{Q_i}(y)=-2$ so that, in fact, the divisor $-2Q_1-2Q_2$ is part of the divisor $(z)_F$. This implies that both places $Q_1$ and $Q_2$ are not in the support of $\diff (F/\ff_q(y))$. From the cases (i), (ii) and (a) above we conclude that every place in the support of $\diff (F/\ff_q(y))$ is of degree $1$ or $3$. Therefore no place of even degree en $\ff_q(y)$ can ramify in $F$ as we claimed. In this way we see that all the conditions of Proposition \ref{examplemawu} hold so that the equation
\[y^3=xf(x)\,,\]
defines a Kummer tower $\m{F}$ over $\ff_q$ with infinite genus if $f(x)$ is irreducible over $\ff_q$ and this proves the theorem.
\end{proof}

\bibliographystyle{plain}{}

\begin{thebibliography}{1}

\bibitem{GS07}
A.~Garcia and H.~Stichtenoth.
\newblock Explicit towers of function fields over finite fields.
\newblock In {\em Topics in geometry, coding theory and cryptography}, volume~6
  of {\em Algebr. Appl.}, pages 1--58. Springer, Dordrecht, 2007.

\bibitem{GST97}
A.~Garcia, H.~Stichtenoth, and M.~Thomas.
\newblock On towers and composita of towers of function fields over finite
  fields.
\newblock {\em Finite Fields Appl.}, 3(3):257--274, 1997.

\bibitem{Ha05}
T.~Hasegawa.
\newblock An upper bound for the {G}arcia-{S}tichtenoth numbers of towers.
\newblock {\em Tokyo J. Math.}, 28(2):471--481, 2005.

\bibitem{Le02}
H.~W. Lenstra, Jr.
\newblock On a problem of {G}arcia, {S}tichtenoth, and {T}homas.
\newblock {\em Finite Fields Appl.}, 8(2):166--170, 2002.

\bibitem{MaWu05}
H.~Maharaj and J.~Wulftange.
\newblock On the construction of tame towers over finite fields.
\newblock {\em J. Pure Appl. Algebra}, 199(1-3):197--218, 2005.

\bibitem{NX01}
H.~Niederreiter and C.~Xing.
\newblock {\em Rational points on curves over finite fields: theory and
  applications}, volume 285 of {\em London Mathematical Society Lecture Note
  Series}.
\newblock Cambridge University Press, Cambridge, 2001.

\bibitem{Stichbook09}
H.~Stichtenoth.
\newblock {\em Algebraic function fields and codes}, volume 254 of {\em
  Graduate Texts in Mathematics}.
\newblock Springer-Verlag, Berlin, second edition, 2009.

\end{thebibliography}

%
%

\end{document}